\numberwithin{equation}{section} \textwidth=17.5cm
\newtheorem{theorem}{Theorem}[section]
\newtheorem{lemma}[theorem]{Lemma}
\newtheorem{example}[theorem]{Example}
\newtheorem{definition}[theorem]{Definition}
\numberwithin{equation}{section}
\begin{document}

\title{Geometric Description of L$_1$-Spaces}

 \author[M. M. Ibragimov]{M. M. Ibragimov}
 \author[K. K. Kudaybergenov]{K. K. Kudaybergenov}

\address{Department of Mathematics,
 Karakalpak state university
Ch. Abdirov 1
  230113, Nukus,    Uzbekistan,}

\email{mukhtar$_{-}$nukus@mail.ru}

\email{karim2006@mail.ru}

\begin{abstract}
We describe strongly facially symmetric spaces which are
isometrically isomorphic to L$_1$-space.
\end{abstract}


\maketitle
\section{Introduction}

An important problem in the theory of operator algebras is a
geometric characterization of state spaces of operator algebras.
In this connection, in the mid-1980s, Y. Friedman and B. Russo
introduced facially symmetric spaces \cite{fr1}. The main goal of
studying these spaces was to obtain a geometric characterization
of preduals of $JBW^\ast$-triples admitting an algebraic
structure. Many properties required in these characterizations
represent natural assumptions concerning state spaces of physical
systems. Such spaces are referred to as geometric models for
states in the quantum mechanics. In \cite{fr3} it is proved that
preduals of von Neumann algebras and, more generally, those of
$JBW^\ast$-triples are neutral strongly facially symmetric spaces.

An attempt to classify facially symmetric spaces was done in
\cite{fr4}, where one has proposed a geometric characterization of
complex Hilbert spaces and complex spin factors and described
$JBW^\ast$-triples of ranks 1 and 2 and Cartan factors of types 1
and 4. Later Y. Friedman and B. Russo \cite{fr5} have described
atomic facially symmetric spaces. Namely, they have proved that a
neutral strongly facially symmetric space is linearly isometric to
the predual of one of Cartan factors of types 1-6, provided that
it satisfies four natural and physically significant axioms which
are known to hold in preduals of all $JBW^\ast$-triples. M. Neal
and B. Russo \cite{NR} have established geometric conditions
ensuring that a facially symmetric space is isometric to the
predual space of a $JBW^\ast$-triple. In particular, they have
proved that any neutral strongly facially symmetric space is
decomposable into the direct sumof atomic and nonatomic strongly
facially symmetric spaces. In \cite{MZ} one has fully described
strongly facially symmetric spaces isometrically isomorphic to
preduals of atomic commutative von Neumann algebras.

In this paper we describe strongly facially symmetric spaces which
are isometrically isomorphic to L$_1$-space.

\section{PRELIMINARIES}

  Let $Z$ be a real or complex normed space. In what follows, two elements  $x, y \in Z$
  are said to be
\textit{orthogonal} (we write $x \diamondsuit y,$) if  $\|x +
y\|=\|x - y\|=\|x\|+\|y\|.$ We also say that subsets  $S, T\subset
Z$ are orthogonal (and write  $(S \diamondsuit T)$), if  $x
\diamondsuit y$ for all pairs  $(x, y)\in S\times T.$ We put
$S^\diamondsuit =\{x \in Z: x \diamondsuit y,\, \forall\,  y\in
S\}$  for a subset $S$ in the space $Z.$ The set $S^\diamondsuit$
is called the \textit{orthogonal complement} of $S.$ A convex
subset $F$ of the unit ball $Z_1=\{x\in Z:\|x\|\leq 1\}$ is called
a \textit{face}, if the condition   $\lambda y+(1-\lambda)z\in F,$
where
 $y, z\in Z_1$ and $\lambda
\in (0,1),$ implies  $y, z\in F.$ A face
 $F$ of the unit ball is said to be \textit{norm exposed}, if
  $F=F_u=\{x\in Z: u(x)=1\}$
for some  $u\in Z^*$ with  $\|u\| =1.$ An element  $u\in Z^\ast$
is called a \textit{projective unit}, if $\|u\| =1$ and $u(y)=0$
for all $y\in F_u^\diamondsuit$ (see \cite {fr1}).

\begin{definition}  \cite {fr1}. A norm exposed face $F_{u}$ of $Z_1$
is said to be \textit{symmetric}, if there exists a linear
isometry $S_u$ from $Z$ onto $Z$ such that $S_u^2=I$ and its fixed
point set exactly coincides with the topological direct sum of the
closure $\overline{sp}F_{u}$ of the linear hull of the face
$F_{u}$ and its orthogonal complement $F_{u}^{\diamond},$ i.e.,
coincides with $(\overline{sp}F_{u})\oplus F_{u}^\diamond.$
\end{definition}

\begin{definition}
 \cite {fr1}. A space $Z$ is called a \textit{weakly facially symmetric space}
  (a WFS-space), if each
norm exposed face of $Z_1$ is symmetric.
\end{definition}

For each symmetric face $F_u$ we define contractive projectors
 $P_k(F_u),$ $k = 0, 1, 2$ on  $Z$ as follows.
Firstly, $P_1(F_u) = (I - S_u)/2$ is the projector onto the
eigenspace corresponding to the eigenvalue $-1$ of the symmetry
$S_u.$ Secondly, $P_2(F_u)$ and  $P_0(F_u)$ are defined as
projectors from  $Z$ onto $\overline{sp}F_u$ and
$F_u^\diamondsuit,$ respectively, i.e., $P_2(F_u) + P_0(F_u) = (I
+ S_u )/2.$ Projectors  $P_k(F_u)Т$s are said to be
\textit{geometric Pierce} projectors.

A projective unit  $u$ from  $Z^*$ is called a \textit{geometric
tripotent}, if  $F_u$ is a symmetric face and $S_u^*u=u$ for the
symmetry $S_u$ corresponding to  $F_u.$  We denote by
$\mathcal{GT}$ and $\mathcal{SF}$ sets of all geometric tripotents
and symmetric faces, respectively. The correspondence
$\mathcal{GT} \ni u \mapsto F_u \in \mathcal{SF}$ is bijective
(see  \cite[Proposition 1.6]{fr2}). For each geometric tripotent
$u$ from the dual of a WFS-space $Z,$  we denote Pierce projectors
by  $P_k(u) = P_k(F_u), k = 0, 1, 2.$ Further, we set $U= Z^\ast,
Z_k(u) = Z_k(F_u)= P_k(u)Z$ and $U_k(u) = U_k(F_u)=
P_k(u)^\ast(U).$ Therefore, we get the Pierce decomposition  $Z =
Z_2(u) + Z_1(u) + Z_0(u)$  and  $U = U_2(u) + U_1(u) + U_0(u).$
Tripotents  $u$  and  $v$  are said to be orthogonal, if $u \in
U_0(v)$ (which implies  $v \in U_0(u)$)  or, equivalently,
 $u \pm v \in
\mathcal{GT}$ (see   \cite[Lemma 2.5]{fr1}). More generally,
elements $a$ and   $b$ from  $U$  are said to be orthogonal, if
one of them belongs to $U_2(u)$  and the other one does to
$U_0(u)$  for some geometric tripotent  $u.$

A contractive projector $Q$ on  $Z$ is said to be
\textit{neutral}, if for each $x \in Z$ the equality $\|Q x \|=
\|x\|$ implies  $Q x=x.$ A space $Z$ is said to be neutral, if for
each $u\in \mathcal{GT}$ the projector $P_2(u)$  is neutral.

\begin{definition}\cite {fr1}.  A WFS-space $Z$
is said to be \textit{strongly facially symmetric} (an SFS-space),
if for every norm exposed face $F_u$ of $Z_1$ and for every $g \in
Z^\ast$ such that $\|g\| = 1$ and $F_u \subset F_g,$ the equality
$S_u^\ast g = g,$ is valid. Here  $S_u$ denotes the symmetry
associated with $F_u.$
\end{definition}

Instructive examples of neutral strongly facially symmetric spaces
are Hilbert spaces, preduals of von Neumann algebras or
$JBW^\ast$-algebras and, more generally, preduals of
$JBW^\ast$-triples. Moreover, geometric tripotents correspond to
nonzero partial isometries in von Neumann algebras and tripotents
in $JBW^\ast$-triples  (see \cite{fr3}).

In a neutral strongly facially symmetric space $Z$ each nonzero
element admits a polar decomposition
 \cite[Theorem 4.3]{fr2}: i.e., for  $0 \neq x \in Z$ there exists a unique geometric tripotent
   $v =v_x$ such
that   $\langle v, x\rangle  = \|x\|$ and  $\langle v, x^\diamond
\rangle = 0.$ For two elements  $x, y \in Z$ we have  $x \diamond
y,$ if and only if  $v_x \diamond v_y$ (see \cite[Corolarry 1.3(b)
and Lemma 2.1]{fr1}). The set of geometric tripotents is ordered
as follows. For given $u, v \in \mathcal{GT}$ one sets $u \leq v,$
if $F_u \subset F_v.$ Note that this definition is equivalent to
the following two conditions. The first one is the equality
$P_2(u)^*v = u.$ The second one states that either  $v - u$ equals
zero or the geometric tripotent is orthogonal to $u$ (see
\cite[Lemma 4.2]{fr2}).

\section{The main result}

Let   $Z$ be a real neutral strongly facially symmetric space and
let  $e\in Z^\ast$ be a geometric tripotent such that
\begin{equation}\label{yad}
Z_1=co(F_e\cup F_{-e}).
\end{equation}
We put
\[
\nabla = \{u \in \mathcal{GT}: u \le e\} \cup \{0\}.
\]
As is known \cite[Proposition 4.5]{fr2}, the set $\nabla$ is a
complete orthomodular lattice with the orthocomplementation
 $u^\perp = e - u$
with respect to the order $"\le".$

\begin{example} The space $\mathbb{R}^n$
with the norm
$$
||x||=\sum\limits_{i=1}^{n}|t_i|,\, x=(t_i)\in \mathbb{R}^n
$$
is an SFS-space. Let us consider the geometric tripotent
$$
e=(1, 1,\cdots, 1)\in \mathbb{R}^n\cong (\mathbb{R}^n)^\ast.
$$
The face
$$
F_e=\left\{x\in \mathbb{R}^n:
 \sum\limits_{i=1}^{n}t_i=1,\, t_i\geq 0, i=\overline{1, n}\right\}
$$
satisfies property \eqref{yad}. In this case we have
$$
\nabla=\left\{u=(\varepsilon_i): \varepsilon_i\in\{0, 1\},
i\in\overline{1, n}\right\}.
$$

More generally, let us consider a measurable space $(\Omega,
\Sigma, \mu)$ possessing the direct sum property and the space
$L_1(\Omega, \Sigma, \mu)$ of all equivalence classes of
integrable real-valued functions on $(\Omega, \Sigma, \mu).$ The
space $L_1(\Omega, \Sigma, \mu)$ with the norm
$$
||f||=\int\limits_\Omega|f(t)|\,d\mu(t),\, f\in L_1(\Omega,
\Sigma, \mu)
$$
is an SFS-space.

Consider the geometric tripotent $e\in L^\infty(\Omega, \Sigma,
\mu)\cong L_1(\Omega, \Sigma, \mu)^\ast,$ where  $e$ denotes the
class containing the function that identically equals $1.$ Then
the face
$$
F_e=\left\{f\in L_1(\Omega, \Sigma, \mu): f\geq
0,\,\int\limits_\Omega f(t)\,d\mu(t)=1\right\}
$$
satisfies property \eqref{yad}. In this case we have
$$
\nabla=\left\{\tilde{\chi}_A: A\in \Sigma\right\}
$$
where $\tilde{\chi}_A$ is the class containing the characteristic
function of the set $A\in \Sigma.$
\end{example}

Now let   $Z$ be a real neutral strongly facially symmetric space
such that there exists a geometric tripotent $e\in Z^\ast$
satisfying property~\eqref{yad}.

It is known \cite[Lemmata 1 and  и 2]{MZ} that if $\nabla$ is a
Boolean algebra, then for every $u\in \nabla,$ $u\neq 0$ the
following conditions are fulfilled:
\begin{enumerate}
\item   $P_1(u)=0;$

\item   $P_2 (u) = P_0 (u^\perp);$

\item $P_2(u+v)=P_2(u)+P_2(v),$ где
$u\diamondsuit v.$
\end{enumerate}

The following theorem is the main result of this paper.

\begin{theorem}\label{MTH}
Let  $Z$ be a real neutral strongly facially symmetric space and
let
  $e\in Z^\ast$ be a geometric
tripotent satisfying property \eqref{yad}.  If $\nabla$  is a
Boolean algebra, then there exists a measurable space $(\Omega,
\Sigma, \mu)$ possessing the direct sum property such that the
space $Z$ is isometrically isomorphic to the space  $L_1(\Omega,
\Sigma, \mu).$
\end{theorem}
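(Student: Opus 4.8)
The plan is to equip $Z$ with an order structure turning it into an abstract $L$-space and then to invoke the Kakutani representation theorem. The natural candidate for the positive cone is $Z_+ := \mathbb{R}_+ F_e = \{t f : t \ge 0,\, f \in F_e\}$. First I would check, using property \eqref{yad}, that $Z_+$ is a closed, proper, generating cone: it is generating because $Z_1 = \mathrm{co}(F_e \cup F_{-e})$ together with $F_{-e} = -F_e$ forces $Z = Z_+ - Z_+$, and proper because $e$ strictly separates $Z_+$ from $-Z_+$. A decisive and elementary observation is that the $L$-norm identity already holds on the cone: since $\langle e, f\rangle = 1 = \|f\|$ for every $f \in F_e$, one gets $\langle e, x\rangle = \|x\|$ for all $x \in Z_+$, whence for $x, y \in Z_+$ the chain $\|x+y\| \ge \langle e, x+y\rangle = \|x\| + \|y\| \ge \|x+y\|$ gives $\|x+y\| = \|x\| + \|y\|$.

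Next I would produce, for each $x \in Z$, an orthogonal Jordan decomposition $x = x_+ - x_-$ with $x_\pm \in Z_+$ and $x_+ \diamondsuit x_-$, which in particular yields $\|x\| = \|x_+\| + \|x_-\|$. The tool is the polar decomposition available in neutral SFS-spaces \cite[Theorem 4.3]{fr2}: let $v_x$ be the geometric tripotent with $\langle v_x, x\rangle = \|x\|$. Under \eqref{yad} I would show that $v_x$ splits as $v_x = p - q$ with $p, q \in \nabla$ mutually orthogonal, and set $x_+ = P_2(p)x$, $x_- = -P_2(q)x$; properties (1)--(3) and neutrality guarantee that these lie in $Z_+$, are orthogonal, and reconstruct $x$. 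Uniqueness follows from the equivalence $x \diamondsuit y \Leftrightarrow v_x \diamondsuit v_y$ noted in the preliminaries.

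The heart of the argument, and the step where the hypothesis that $\nabla$ is a \emph{Boolean} (equivalently, distributive) algebra is indispensable, is to upgrade the partial order induced by $Z_+$ to a lattice order. I would first record, from properties (1)--(3), that $u \mapsto P_2(u)$ is a contractive, orthogonally additive, projection-valued map on $\nabla$ with $Z = Z_2(u) \oplus Z_0(u)$; distributivity of $\nabla$ then makes $\{P_2(u): u \in \nabla\}$ a commuting Boolean algebra of projections on $Z$. Using this commuting family together with the Jordan decomposition I would establish the Riesz decomposition property, hence the existence of $x \wedge y$ for all $x, y \in Z_+$, and verify that the norm is a lattice norm, i.e. $|x| \le |y| \Rightarrow \|x\| \le \|y\|$. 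This is the main obstacle: in the non-distributive (non-commutative) situation the support tripotents need not commute and $Z$ is typically an anti-lattice, so genuine use must be made of the Boolean hypothesis to exclude this — concretely, to show that the supremum in $\nabla$ of two support tripotents governs the supremum of the corresponding positive elements.

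Once $Z$ is known to be a Banach lattice whose norm is additive on the positive cone, it is an abstract $L$-space, and the Kakutani representation theorem furnishes an isometric lattice isomorphism of $Z$ onto $L_1(\Omega, \Sigma, \mu)$ for some measure space. Since every abstract $L$-space is Dedekind complete, the representing measure may be taken localizable, i.e. $(\Omega, \Sigma, \mu)$ may be chosen with the direct sum property, and its measure algebra is then canonically identified with $\nabla$. This delivers the desired isometric isomorphism $Z \cong L_1(\Omega, \Sigma, \mu)$. Apart from the lattice step, I expect the only delicate points to be the $\sigma$-additivity and order-continuity checks for $u \mapsto P_2(u)$, which are what match $\nabla$ with an honest $\sigma$-algebra rather than a mere Boolean algebra.
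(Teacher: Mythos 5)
Your overall skeleton coincides with the paper's: the positive cone $\mathbb{R}_+F_e$, additivity of the norm on the cone via $\|x\|=\langle e,x\rangle$, an orthogonal Jordan decomposition obtained by splitting the support tripotent $v_x$ as a difference $u_1-u_2$ of orthogonal elements of $\nabla$, and the final passage through abstract $L$-spaces to Kakutani/Lindenstrauss--Tzafriri (the paper cites Theorem 1.b.2 of Lindenstrauss--Tzafriri, which directly yields the direct sum property you worry about at the end). The genuine gap is precisely at the step you yourself flag as the heart of the matter. Your route to the lattice structure is: distributivity of $\nabla$ makes $\{P_2(u):u\in\nabla\}$ a commuting Boolean algebra of projections, from which you ``establish the Riesz decomposition property, hence the existence of $x\wedge y$.'' Two problems. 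First, neither the commutativity claim nor the Riesz decomposition property is actually derived; they are announced as things you ``would'' prove. Second, and more seriously, the final inference is invalid as stated: the Riesz decomposition (interpolation) property does not imply that a partially ordered vector space is a lattice --- interpolation groups, and $A(K)$ for a Choquet simplex $K$ whose extreme boundary is not closed, are standard counterexamples. What is true is a nontrivial representation theorem for base-normed Banach spaces with the decomposition property, but you neither cite nor prove such a result, so the central step of your plan does not go through as written.

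The paper closes this gap with an explicit construction for which you already possess all the ingredients, and which needs neither commutativity of the Pierce projections nor Riesz decomposition. One first strengthens the Jordan decomposition to: for every $z\in Z$ there exist orthogonal $u_+,u_-\in\nabla$ with $u_++u_-=e$, $P_2(u_+)z\ge 0$ and $P_2(u_-)z\le 0$ (starting from $v_z=u_1-u_2$, take $u_+=u_1$ and absorb the complement of the support into $u_-$). Then, given $x,y\in Z$, apply this to $z=x-y$ and define $x\vee y=P_2(u_+)(x)+P_2(u_-)(y)$ and $x\wedge y=P_2(u_+)(y)+P_2(u_-)(x)$. The verification that these are the supremum and infimum uses exactly two facts: each $P_2(u)$ with $u\in\nabla$ is a \emph{positive} operator (a short argument from $P_1(u)=0$ and the face property of $F_e$: if $y\in F_e$ and $P_2(u)y\neq 0$ then $P_2(u)y/\|P_2(u)y\|\in F_e$), and $P_2(u_+)+P_2(u_-)=P_2(e)=I$, which follows from property (3) together with \eqref{yad}. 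For instance, if $w\ge x$ and $w\ge y$ then $w-x\vee y=P_2(u_+)(w-x)+P_2(u_-)(w-y)\ge 0$. The Boolean hypothesis enters only through the equivalence $u\wedge v=0\Leftrightarrow u\diamondsuit v$ on $\nabla$ (which drives the splitting $v=u_1-u_2$) and through the identities $P_1(u)=0$, $P_2(u)=P_0(u^\perp)$, $P_2(u+v)=P_2(u)+P_2(v)$ for $u\diamondsuit v$. If you replace your Riesz-decomposition detour by this direct construction, the rest of your outline is sound and matches the paper.
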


To prove this theorem, we need ten lemmas.

Assume that $u, v\in \mathcal{GT}.$
 If $F_u\cap F_v\neq \emptyset,$
then we denote by $u\wedge v$ a geometric tripotent such that
  $F_{u\wedge v}=F_u\cap F_v;$
otherwise we put $u\wedge v =0.$

Let  $u, v\in \nabla.$ It is clear that  $u \diamondsuit v$
implies  $u\wedge v =0.$

Let $u\wedge v =0.$ Since $\nabla$ is a Boolean algebra, we get
$u\leq e-v.$ Therefore, in view of the condition $v \diamondsuit
(e- v)$ we have $u \diamondsuit v.$

Thus we get the following lemma.

 \begin{lemma}\label{dis}
Let  $u, v\in \nabla.$ Then
$
u\wedge v =0   \Leftrightarrow u \diamondsuit v.
$
 \end{lemma}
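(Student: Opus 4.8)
The statement is a biconditional, so the plan is to establish the two implications separately, and I expect the Boolean hypothesis on $\nabla$ to be genuinely needed only for the converse direction.

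For the implication $u \diamondsuit v \Rightarrow u\wedge v = 0$, I would argue by contradiction and show that the faces cannot meet. Suppose $F_u\cap F_v\neq\emptyset$ and pick $x$ in the intersection. Since each norm exposed face has the form $F_w=\{x: w(x)=1\}$ with $\|w\|=1$, we get $u(x)=v(x)=1$, and then $u(x)\le\|u\|\,\|x\|=\|x\|\le 1$ forces $\|x\|=1$. On the other hand, $u\diamondsuit v$ means the geometric tripotents are orthogonal, so $u+v\in\mathcal{GT}$ and hence $\|u+v\|=1$; but $(u+v)(x)=2$ violates $\abs{(u+v)(x)}\le\|u+v\|\,\|x\|=1$. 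This contradiction gives $F_u\cap F_v=\emptyset$, whence $u\wedge v=0$ by the definition of the meet. Note this half uses only that orthogonal geometric tripotents sum to a geometric tripotent, not the Boolean structure.

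For the converse $u\wedge v = 0 \Rightarrow u\diamondsuit v$, the plan is first to convert the meet condition into an order relation using distributivity, and then to transfer that order relation back to the analytic orthogonality $\diamondsuit$. Since $\nabla$ is a Boolean algebra, I can write $u = u\wedge e = u\wedge(v\vee v^\perp) = (u\wedge v)\vee(u\wedge v^\perp) = u\wedge v^\perp \le v^\perp = e-v$, so $u\le e-v$. Now I would invoke the known orthogonality $v\diamondsuit(e-v)$ of a geometric tripotent with its orthocomplement, together with the fact that in the orthomodular lattice $\nabla$ the relation $w\diamondsuit v$ is exactly $w\le v^\perp$; since $u\le e-v=v^\perp$, this reads directly as $u\diamondsuit v$. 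Equivalently, one uses that $\diamondsuit$-orthogonality is inherited downward along the order: $u\le e-v$ and $(e-v)\diamondsuit v$ yield $u\diamondsuit v$.

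The main obstacle is the single step $u\wedge v=0\Rightarrow u\le e-v$ in the converse: this is precisely the place where the Boolean (distributive) hypothesis is indispensable, since in a general orthomodular lattice a vanishing meet does not force $u\le v^\perp$. Once that order relation is secured, the passage between the lattice orthogonality and the metric orthogonality $\diamondsuit$ is routine, being supplied by the established correspondence between the order on $\mathcal{GT}$ and the orthocomplementation $u^\perp=e-u$ on $\nabla$.
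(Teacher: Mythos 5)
Your proof is correct and takes essentially the same route as the paper: the forward implication is the one the paper dismisses as clear (you merely supply the explicit norm estimate via $u+v\in\mathcal{GT}$), and your converse --- Boolean distributivity giving $u\le e-v$, then $v\diamondsuit(e-v)$ together with downward inheritance of orthogonality --- is exactly the paper's argument.
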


\begin{lemma}\label{cap}
Let  $v \in \mathcal{GT}.$ Then  $F_v\cap F_e\neq\emptyset$ or
$F_{-v}\cap F_e\neq\emptyset.$
 \end{lemma}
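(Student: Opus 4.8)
The plan is to reduce the whole statement to a single element of $F_v$ and then squeeze it through the convex decomposition~\eqref{yad} of the unit ball. First I would fix an element $x_0 \in F_v$; such an element exists because $v \in \mathcal{GT}$ means $F_v$ is a nonempty norm exposed face, and any $x_0 \in F_v$ satisfies $\langle v, x_0 \rangle = 1$, whence $1 = \langle v, x_0 \rangle \le \|v\|\,\|x_0\| \le 1$ forces $\|x_0\| = 1$ and so $x_0 \in Z_1$. Using $Z_1 = co(F_e \cup F_{-e})$ and the fact that the convex hull of the union of the two convex sets $F_e, F_{-e}$ consists exactly of the two-term combinations, I would write
\[
x_0 = \alpha a + \beta b, \qquad a \in F_e,\ b \in F_{-e},\ \alpha,\beta \ge 0,\ \alpha + \beta = 1 .
\]

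The core of the argument is an extremality observation. Since $\|v\| = 1$ and $a, b \in Z_1$, we have $\langle v, a \rangle \le 1$ and $\langle v, b \rangle \le 1$, while by linearity $\alpha \langle v, a \rangle + \beta \langle v, b \rangle = \langle v, x_0 \rangle = 1$. A convex combination of two numbers each at most $1$ can equal $1$ only if every term carrying positive weight already equals $1$. Hence, if $\alpha > 0$, then $\langle v, a \rangle = 1$, i.e. $a \in F_v$; combined with $a \in F_e$ this gives $a \in F_v \cap F_e$, so the first alternative of the lemma holds.

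In the complementary case $\alpha = 0$ we have $x_0 = b \in F_{-e}$, so $\langle e, x_0 \rangle = -1$ together with $\langle v, x_0 \rangle = 1$. Here I would pass to $-x_0$, which flips both evaluations: $\langle e, -x_0 \rangle = 1$ and $\langle v, -x_0 \rangle = -1$, that is, $-x_0 \in F_e$ and $-x_0 \in F_{-v}$. Thus $-x_0 \in F_{-v} \cap F_e$ and the second alternative holds. Since the two cases $\alpha > 0$ and $\alpha = 0$ exhaust all possibilities, the lemma follows in either direction.

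I expect the only point requiring care to be the bookkeeping around~\eqref{yad}: confirming that $co(F_e \cup F_{-e})$ really produces an exact finite decomposition $x_0 = \alpha a + \beta b$ (so that no closure or limiting argument is needed) and that $F_v$ is genuinely nonempty for $v \in \mathcal{GT}$. Once these are in place, the extremality step forcing $\langle v, a\rangle = 1$ and the sign-reversal trick handling $\alpha = 0$ are both immediate, and no further structure of the orthomodular lattice $\nabla$ or the Boolean hypothesis is invoked in this particular lemma.
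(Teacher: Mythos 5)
Your proof is correct and takes essentially the same route as the paper: both fix an element of $F_v$, decompose it via \eqref{yad} as a convex combination of points of $F_e$ and $F_{-e}$, and your explicit extremality computation $\alpha\langle v,a\rangle+\beta\langle v,b\rangle=1$ forcing $\langle v,a\rangle=1$ is just an unpacked version of the paper's one-line appeal to the fact that $F_v$ is a face. The only cosmetic difference is your two-case split ($\alpha>0$ versus $\alpha=0$) in place of the paper's three cases $t=1$, $t=0$, $0<t<1$.
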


\begin{proof}
Let us show that for any $v \in \mathcal{GT}$ either $F_v\cap
F_e\neq\emptyset$ or  $F_{-v}\cap F_e\neq\emptyset.$ Let $f\in
F_v.$ Equality \eqref{yad}
 implies that
$$
f=tg+(1-t)h,
$$
where
 $g\in F_e,\, h\in F_{-e},\, 0\leq t \leq1.$

If  $t=1,$ then $f=g;$ therefore $f=g\in F_{v}\cap F_e.$

If  $t=0,$ then $f=h;$ therefore $-f\in F_{-v}\cap F_e.$

Now let  $0<t<1.$ Since $F_v$ is a face, we have $g, h\in F_v.$
Hence, $F_v\cap F_e\neq\emptyset$ and  $F_{-v}\cap
F_e\neq\emptyset.$

Thus, we conclude that $F_v\cap F_e\neq\emptyset$ or  $F_{-v}\cap
F_e\neq\emptyset.$
\end{proof}

 \begin{lemma}\label{JDM}
For every $u \in \mathcal{GT}$  there exist mutually orthogonal
geometric tripotents $u_1, u_2\in \nabla$ such that $u=u_1-u_2.$
\end{lemma}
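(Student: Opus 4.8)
The plan is to produce the decomposition explicitly by setting $u_1=u\wedge e$ and $u_2=(-u)\wedge e$, i.e. the geometric tripotents whose faces are $F_u\cap F_e$ and $F_{-u}\cap F_e$ (and $0$ when the corresponding intersection is empty; by Lemma~\ref{cap} at least one of them is nonzero unless $u=0$). Both $u_1$ and $u_2$ lie in $\nabla$, since $F_{u_1}=F_u\cap F_e\subseteq F_e$ and $F_{u_2}=F_{-u}\cap F_e\subseteq F_e$ force $u_1\le e$ and $u_2\le e$. Moreover $F_{u_1}\cap F_{u_2}\subseteq F_u\cap F_{-u}=\emptyset$, because no norm-one element can satisfy both $u(x)=1$ and $u(x)=-1$; hence $u_1\wedge u_2=0$, and Lemma~\ref{dis} yields $u_1\diamondsuit u_2$. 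Since orthogonal geometric tripotents satisfy $u_1\pm u_2\in\mathcal{GT}$, the element $w=u_1-u_2$ is again a geometric tripotent, so it remains only to verify $w=u$, which by the bijection $\mathcal{GT}\ni w\mapsto F_w$ amounts to proving $F_w=F_u$.

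For the inclusion $F_u\subseteq F_w$ I would take $x\in F_u$ and use \eqref{yad} to write $x=tg+(1-t)h$ with $g\in F_e$, $h\in F_{-e}$, $0\le t\le 1$; since $F_u$ is a face, $g,h\in F_u$. Then $g\in F_u\cap F_e=F_{u_1}\subseteq Z_2(u_1)$, so $\langle u_1,g\rangle=1$, while orthogonality ($u_2\in U_0(u_1)$) gives $\langle u_2,g\rangle=0$, whence $w(g)=1$; symmetrically $-h\in F_{-u}\cap F_e=F_{u_2}$ gives $\langle u_2,h\rangle=-1$ and $\langle u_1,h\rangle=0$, so $w(h)=1$. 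Consequently $w(x)=t+(1-t)=1=\|x\|$ and $x\in F_w$ (the cases $t\in\{0,1\}$ being immediate).

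The reverse inclusion $F_w\subseteq F_u$ is where the main work lies, and it reduces to a positivity estimate: for every $v\in\nabla$ and every $g\in F_e$ one has $0\le\langle v,g\rangle\le 1$. To prove this I would write $e=v+v^\perp$ with $v^\perp=e-v$ orthogonal to $v$; since $P_1(v)=0$ we have $I=P_2(v)+P_0(v)$ and $P_0(v)=P_2(v^\perp)$, and because $F_v\subseteq F_e$ the functional $e$ agrees with $v$ on $Z_2(v)$ and with $v^\perp$ on $Z_0(v)$. Using that the Peirce components are orthogonal, so that $\|P_2(v)g\|+\|P_0(v)g\|=\|g\|=1$, the element $\xi=P_2(v)g-P_0(v)g$ has norm $1$, and $\langle e,\xi\rangle=\langle v,g\rangle-\langle v^\perp,g\rangle=2\langle v,g\rangle-1$; the bound $|\langle e,\xi\rangle|\le 1$ then gives $0\le\langle v,g\rangle\le 1$. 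Granting this, I take $x\in F_w$, decompose $x=tg+(1-t)h$ as before so that $g,h\in F_w$, and apply the estimate with $e=u_1+u_2+u_3$ where $u_3=(u_1+u_2)^\perp$: from $\langle u_1,g\rangle-\langle u_2,g\rangle=1$ and $\langle u_1,g\rangle+\langle u_2,g\rangle+\langle u_3,g\rangle=1$ together with $\langle u_i,g\rangle\in[0,1]$ I force $\langle u_1,g\rangle=1$, i.e. $g\in F_{u_1}\subseteq F_u$; the same argument applied to $-h\in F_e$ gives $-h\in F_{u_2}\subseteq F_{-u}$, hence $u(x)=t+(1-t)=1$ and $x\in F_u$. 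Thus $F_u=F_w$ and $u=u_1-u_2$. The delicate point I expect to handle with care is the orthogonality of the Peirce components of $g$, so that the two norms add, since this additivity is what underlies the whole positivity argument.
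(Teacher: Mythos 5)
Your proposal is correct, and its first half coincides with the paper's: the same candidates $u_1=u\wedge e$, $u_2=(-u)\wedge e$, and the same argument that $u_1\wedge u_2=0$ (a common point of $F_{u_1}$ and $F_{u_2}$ would give $\langle u,x\rangle=\langle -u,x\rangle=1$), followed by Lemma~\ref{dis}. Where you genuinely diverge is the verification that $u=u_1-u_2$. The paper sets $v=u-u_1+u_2$, assumes $v\neq 0$, applies Lemma~\ref{cap} to $v$ and invokes ``$v\le u$'' to reach a contradiction --- which tacitly treats $v$ as a norm-one geometric tripotent, something never established, so the published argument is loose exactly there. You instead form $w=u_1-u_2\in\mathcal{GT}$ (legitimate, since $u_1\diamondsuit u_2$ gives $u_1\pm u_2\in\mathcal{GT}$) and prove $F_w=F_u$, concluding $w=u$ from the bijection $\mathcal{GT}\to\mathcal{SF}$. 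Your inclusion $F_u\subseteq F_w$ via \eqref{yad} and the face property is fine, and your key estimate $0\le\langle v,g\rangle\le 1$ for $v\in\nabla$, $g\in F_e$ --- built from $P_1(v)=0$, $P_0(v)=P_2(v^\perp)$, $P_2(v)^\ast e=v$, and the additivity $\|P_2(v)g\|+\|P_0(v)g\|=\|g\|$ --- is sound; the additivity you rightly flag as the delicate point is precisely the orthogonality of the Peirce-$2$ and Peirce-$0$ components that the paper itself uses without comment (e.g., in the proof of the decomposition $x=x_++x_-$), and the facts $P_1(v)=0$, $P_0(v)=P_2(v^\perp)$ are available from \cite{MZ} under the Boolean hypothesis, which is in force throughout (Lemma~\ref{dis} already requires it). Two small remarks: the auxiliary tripotent $u_3=(u_1+u_2)^\perp$ is unnecessary, since $\langle u_1,g\rangle-\langle u_2,g\rangle=1$ with both values in $[0,1]$ already forces $\langle u_1,g\rangle=1$ and $\langle u_2,g\rangle=0$, sparing you the justification that $u_1+u_2\in\nabla$; and you should say explicitly that when $u_1=0$ (resp.\ $u_2=0$) the same bounds rule out $t>0$ (resp.\ $t<1$) in the decomposition of $x\in F_w$, so the degenerate cases close. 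Net effect: your route buys rigor at exactly the step where the paper's proof is weakest, at the modest cost of importing the Boolean-algebra facts into this lemma.
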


\begin{proof}
Put $ u_1=u\wedge e$ and $u_2=(-u)\wedge e.$ Let us show that $
u_1\diamondsuit u_2$ and $u=u_1+u_2. $ First, assume that
$u_1\wedge u_2\neq 0.$ Then there exists an element $x\in Z_1$
such that
$$
\langle u_1, x\rangle= \langle u_2, x\rangle=1.
$$
Since $u_1=u\wedge e,\, u_2=(-u)\wedge e,$ we obtain
$$
\langle u, x\rangle= 1,\, \langle -u, x\rangle=1,
$$
which is a contradiction. Consequently, $u_1\wedge u_1=0.$
Therefore, in view of Lemma \ref{dis} we get  $u_1\diamondsuit
u_2.$

Assume that $v=u-u_1+u_2\neq 0.$ By virtue of Lemma \ref{cap}
 we have $F_v\cap F_e\neq\emptyset$ or $F_{-v}\cap
F_e\neq\emptyset.$ Without loss of generality, we assume that
$F_v\cap F_e\neq\emptyset.$ This implies that there is an element
$x\in Z_1$ such that
$$
\langle v, x\rangle=\langle e, x\rangle=1.
$$
Since $v\leq u,$ we have $\langle u, x\rangle=1.$ Hence  $x\in
F_u\cap F_e,$ i.e.,  $x\in F_{u_1}$ or $\langle u_1, x\rangle=1.$
The condition $u_1\diamondsuit u_2,$ implies that $\langle u_2,
x\rangle=0.$ Thus, we have
$$
\langle v, x\rangle= \langle u, x\rangle- \langle u_1, x\rangle+
\langle u_2, x\rangle =0,
$$
which contradicts the equality  $\langle v, x\rangle=1.$ This
yields the representation $u=u_1-u_2,$ as required.
\end{proof}

On the space $Z$ we introduce the order relation as follows:
\begin{equation}\label{order}
x\geq 0,\, x\in Z \Leftrightarrow \langle u, x\rangle\geq 0,\,
\forall\, u\in  \nabla.
\end{equation}

\begin{lemma}
\label{pos} Let $x\in Z.$ The following conditions are equivalent:
\begin{enumerate}
\item $x\geq 0;$

\item $v_x\in \nabla;$

\item $x\in \mathbb{R}^+F_e.$
\end{enumerate}
\end{lemma}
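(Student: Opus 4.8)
The plan is to prove the equivalences cyclically, $(1)\Rightarrow(2)\Rightarrow(3)\Rightarrow(1)$; the case $x=0$ being trivial, I assume throughout that $x\neq 0$ and write $v_x$ for its support tripotent. The two ``soft'' implications are $(2)\Rightarrow(3)$ and $(3)\Rightarrow(1)$. For $(2)\Rightarrow(3)$, if $v_x\in\nabla$ then $v_x\le e$, so $F_{v_x}\subset F_e$; since $\langle v_x,x\rangle=\|x\|$ gives $x/\|x\|\in F_{v_x}$, we get $x/\|x\|\in F_e$, i.e. $x\in\mathbb{R}^+F_e$. For $(3)\Rightarrow(1)$ it suffices to check $\langle u,f\rangle\ge 0$ for every $f\in F_e$ and $u\in\nabla$. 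Using the listed Boolean identities $P_1(u)=0$ and $P_2(u)=P_0(u^\perp)$ (so $P_0(u)=P_2(u^\perp)$), I would write $f=P_2(u)f+P_2(u^\perp)f$; as $Z_2(u)\diamondsuit Z_0(u)$ these summands are orthogonal, whence $1=\|f\|=\|P_2(u)f\|+\|P_2(u^\perp)f\|$. On the other hand $\langle u,f\rangle+\langle u^\perp,f\rangle=\langle e,f\rangle=1$, while contractivity gives $\langle u,f\rangle\le\|P_2(u)f\|$ and $\langle u^\perp,f\rangle\le\|P_2(u^\perp)f\|$. Two numbers whose sums agree and which are dominated termwise must agree termwise, forcing $\langle u,f\rangle=\|P_2(u)f\|\ge 0$.

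The substantive direction is $(1)\Rightarrow(2)$. Assuming $x\ge 0$, Lemma~\ref{JDM} yields mutually orthogonal $u_1,u_2\in\nabla$ with $v_x=u_1-u_2$. First I extract positivity information: since $u_1+u_2\in\nabla$ with $\|u_1+u_2\|=1$, positivity gives $\langle u_1,x\rangle+\langle u_2,x\rangle\le\|x\|$, whereas $\langle u_1,x\rangle-\langle u_2,x\rangle=\langle v_x,x\rangle=\|x\|$; subtracting and using $\langle u_2,x\rangle\ge 0$ forces $\langle u_2,x\rangle=0$ and $\langle u_1,x\rangle=\|x\|$. Hence $x/\|x\|\in F_{u_1}$, so $x\in\overline{sp}F_{u_1}=Z_2(u_1)$. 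It then remains only to show $u_2=0$, which gives $v_x=u_1\in\nabla$.

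To see $u_2=0$ I argue by contradiction, invoking the second polar-decomposition relation $\langle v_x,z\rangle=0$ for all $z\diamondsuit x$. If $u_2\neq 0$, choose $z\in F_{u_2}$, so $\langle u_2,z\rangle=1$ and $z\in Z_2(u_2)$. The orthogonality $u_1\diamondsuit u_2$ yields $Z_2(u_1)\subset Z_0(u_2)$ and $Z_2(u_2)\subset Z_0(u_1)$; the first gives $x\in Z_0(u_2)$, and since $Z_2(u_2)\diamondsuit Z_0(u_2)$ we obtain $z\diamondsuit x$. The second gives $\langle u_1,z\rangle=0$, so $\langle v_x,z\rangle=\langle u_1,z\rangle-\langle u_2,z\rangle=-1$, contradicting $\langle v_x,z\rangle=0$. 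Thus $u_2=0$ and $v_x=u_1\in\nabla$, closing the cycle. I expect this last step, ruling out the ``negative part'' $u_2$, to be the main obstacle: it is exactly where the uniqueness built into the polar decomposition must be played off against the lattice structure of $\nabla$, whereas the remaining implications reduce to bookkeeping with the Pierce projectors and the additivity of the norm on orthogonal Pierce summands.
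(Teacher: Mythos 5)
Your proof is correct, and on the pivotal implication $(1)\Rightarrow(2)$ it takes a genuinely different route from the paper's --- one that, incidentally, repairs a flaw in the paper's own argument. The soft implications match in substance: your $(2)\Rightarrow(3)$ is identical to the paper's, and in $(3)\Rightarrow(1)$ the paper merely \emph{asserts} the key equality $\langle u,y\rangle=\|P_2(u)y\|$ for $y\in F_e$, which you actually prove by playing $\langle u,f\rangle+\langle u^{\perp},f\rangle=1$ against $\|P_2(u)f\|+\|P_0(u)f\|=1$ and comparing termwise (just add the trivial cases $u=0$ and $u=e$ so that $u^{\perp}\neq 0$). In $(1)\Rightarrow(2)$ both arguments begin with Lemma~\ref{JDM}, write $v_x=u_1-u_2$, and reduce to showing $\langle u_2,x\rangle=0$ and then $u_2=0$; the mechanics then diverge. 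The paper derives $\langle u_2,x\rangle=0$ from a Pierce computation resting on $P_2(v_x)=P_2(u_1)-P_2(u_2)$; but since $P_2(-u_2)=P_2(u_2)$, the correct identity for the orthogonal pair is $P_2(u_1-u_2)=P_2(u_1)+P_2(u_2)$, with which the paper's displayed chain degenerates to the tautology $\langle u_2,x\rangle=\langle u_2,x\rangle$ --- and indeed that chain never invokes the hypothesis $x\geq 0$, which cannot suffice (the conclusion fails for $x\leq 0$). Your estimate $\langle u_1,x\rangle+\langle u_2,x\rangle\leq\|x\|=\langle u_1,x\rangle-\langle u_2,x\rangle$ combined with $\langle u_2,x\rangle\geq 0$ inserts positivity exactly where it must enter and is the natural fix (one quibble: the bound $\langle u_1+u_2,x\rangle\leq\|x\|$ comes from contractivity, $\|u_1+u_2\|=1$, not from positivity, and only $u_1+u_2\in\mathcal{GT}$ is needed, not $u_1+u_2\in\nabla$). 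For the final step the paper invokes minimality of the support tripotent ($u_1\leq v_x$ attains the norm, hence $u_1=v_x$ and $u_2=0$), whereas you exploit the other half of the polar decomposition, $\langle v_x, x^{\diamond}\rangle=0$, exhibiting $z\in F_{u_2}$ with $z\diamondsuit x$ and $\langle v_x,z\rangle=-1$. Both closings are legitimate: the minimality argument is shorter, while yours costs the standard Pierce facts $Z_2(u_1)\subset Z_0(u_2)$, $Z_2(u_2)\diamondsuit Z_0(u_2)$, and $F_{u_2}\neq\emptyset$ for $u_2\neq 0$ --- all used freely elsewhere in the paper --- but has the advantage of relying only on the two defining properties of $v_x$ rather than on its characterization as the least norm-attaining tripotent.
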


\begin{proof}
(1)$\Rightarrow$(2). Let $x\geq 0.$ Consider the least geometric
tripotent $v_x$ such that  $\langle v_x, x\rangle=||x||.$
According to Lemma \ref{JDM}, there exist elements   $u_1, u_2\in
\nabla$ such that $v_x=u_1-u_2.$ Assume that $u_2\neq 0.$

Then we have
$$
\langle u_2, x\rangle=\langle u_2, P_2(v_x)(x)\rangle=
$$
$$
=\langle u_2, P_2(u_1)(x)\rangle- \langle u_2, P_2(u_2)(x)\rangle=
$$
$$
=- \langle u_2, P_2(u_2)(x)\rangle= - \langle P_2(u_2)^\ast (u_2),
x\rangle = - \langle u_2, x\rangle.
$$
This implies that $\langle u_2, x\rangle=0,$ whence  $\langle v_x,
x\rangle=\langle u_1, x\rangle.$ Since $u_1\leq v_x$ and  $v_x$ is
the least tripotent such that $\langle v_x, x\rangle=||x||,$ we
obtain the equality $u_1=v_x.$ Therefore,  $u_2=0$ and
$v_x=u_1\in \nabla.$

(2)$\Rightarrow$(3). Assume that $v_x\in \nabla.$ Since  $v_x\leq
e,$ we get $\frac{\textstyle x}{\textstyle ||x||}\in
F_{v_x}\subset F_e.$ In other words, we have  $x\in
\mathbb{R}^+F_e.$

(3)$\Rightarrow$(1). Finally, let $x\in \mathbb{R}^+F_e.$ Then
$x=\alpha y$ for some $y\in F_e,$ $\alpha\geq 0.$ Therefore, for
each $u\in \nabla$ we have
$$
\langle u, x\rangle=\langle u, \alpha y\rangle= \alpha\langle u,
y\rangle= \alpha ||P_2(u)(y)||\geq 0,
$$
i.e.,  $\langle u, x\rangle\geq 0.$ This means that $x\geq 0.$
\end{proof}

\begin{lemma}
\label{orsp} $Z$  is a partially ordered vector space, i.e., the
following properties are fulfilled:
\begin{enumerate}
\item $x\leq x;$

\item $x\leq y,\,  y\leq    z \Rightarrow  x\leq z;$

\item $x\leq y,\,  y\leq    x \Rightarrow  x=y;$

\item $x\leq y\,   \Rightarrow  x+z\leq y+z;$

\item $x\geq 0,\,  \lambda \geq 0 \Rightarrow  \lambda x\geq 0.$
\end{enumerate}
\end{lemma}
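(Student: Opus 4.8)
The plan is to observe that four of the five properties are immediate consequences of the linearity of each pairing $\langle u,\cdot\rangle$ together with the definition \eqref{order}, reading $x\le y$ as $y-x\ge 0$. For (1) we have $\langle u,x-x\rangle=\langle u,0\rangle=0\ge 0$ for every $u\in\nabla$. For (2), if $\langle u,y-x\rangle\ge 0$ and $\langle u,z-y\rangle\ge 0$ for all $u\in\nabla$, then $\langle u,z-x\rangle=\langle u,z-y\rangle+\langle u,y-x\rangle\ge 0$. For (4), the difference $(y+z)-(x+z)=y-x$, so positivity is preserved verbatim. For (5), $\langle u,\lambda x\rangle=\lambda\langle u,x\rangle\ge 0$ whenever $\lambda\ge 0$ and $x\ge 0$. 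None of these uses anything beyond bilinearity of the pairing and the order axioms.

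The only substantive assertion is antisymmetry (3). Here I would set $w=y-x$ and note that the hypotheses $x\le y$ and $y\le x$ force $\langle u,w\rangle\ge 0$ and $\langle u,w\rangle\le 0$ simultaneously for every $u\in\nabla$, hence $\langle u,w\rangle=0$ for all $u\in\nabla$. The task then reduces to showing that $\nabla$ separates the points of $Z$, i.e. that this condition already forces $w=0$.

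To close this, I would invoke the polar decomposition available in a neutral strongly facially symmetric space: there is a geometric tripotent $v_w$ with $\langle v_w,w\rangle=\|w\|$. Applying Lemma \ref{JDM} to $v_w$ yields mutually orthogonal $u_1,u_2\in\nabla$ with $v_w=u_1-u_2$. Then
$$
\|w\|=\langle v_w,w\rangle=\langle u_1,w\rangle-\langle u_2,w\rangle=0,
$$
since $u_1,u_2\in\nabla$ and the pairing against $w$ vanishes there. Thus $w=0$ and $x=y$, which gives (3).

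The main obstacle is precisely this point-separation step in (3), and it is exactly where the structural lemmas do the work: a priori the order \eqref{order} is tested only against the sublattice $\nabla$, not against all of $Z^\ast$, so one cannot directly conclude $w=0$ from $\langle u,w\rangle=0$ on $\nabla$. Lemma \ref{JDM}, which represents an arbitrary geometric tripotent as a difference of two elements of $\nabla$, is what converts the polar-decomposition identity $\langle v_w,w\rangle=\|w\|$ into a quantity evaluated entirely on $\nabla$, and that is what makes the argument go through.
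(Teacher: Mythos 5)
Your proposal is correct and follows essentially the same route as the paper: the first four properties by linearity of the pairing, and antisymmetry by taking the polar-decomposition tripotent $v_w$ for $w=y-x$, splitting it via Lemma \ref{JDM} as $u_1-u_2$ with $u_1,u_2\in\nabla$, and deducing $\|w\|=\langle u_1,w\rangle-\langle u_2,w\rangle=0$. The paper phrases this as a contradiction (assume $x\neq y$, so that $v_w$ exists), which is the one small formality you should mirror, since the polar decomposition is only available for nonzero elements.
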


\begin{proof}
Properties (1),(2), (4) and (5) are trivial. Let us prove property
(3). Let  $x\leq y$ and $y\leq    x.$ Then  $\langle v, x-y\rangle
=0$ whenever $v\in \nabla.$ Let  $x\neq y.$ Choose a geometric
tripotent $u\in \mathcal{GT}$ such that
$$
\langle u, x-y\rangle =||x-y||.
$$
By virtue of Lemma  \ref{JDM} there exist two elements  $u_1,
u_2\in \nabla$ such that  $u=u_1-u_2.$ Then, we get $\langle u_1,
x-y\rangle =\langle u_2, x-y\rangle =0.$ This implies the equality
$\langle u, x-y\rangle =0,$ which is a contradiction.
Consequently, we get $x=y,$ as required.
\end{proof}

\begin{lemma}
\label{posit} For each $u\in \nabla$ the operator $P_2(u)$ is
positive.
\end{lemma}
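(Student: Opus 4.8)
The plan is to use the definition (\ref{order}) of the order together with Lemma \ref{pos} to reduce positivity of $P_2(u)$ to a computation of its adjoint $P_2(u)^\ast$ on $\nabla$. Indeed, $P_2(u)(x)\ge 0$ means precisely that $\langle w, P_2(u)(x)\rangle = \langle P_2(u)^\ast w, x\rangle \ge 0$ for every $w\in\nabla$, so everything hinges on understanding $P_2(u)^\ast w$ for $u,w\in\nabla$. The cleanest route is to establish the identity
\begin{equation}\label{p2star}
P_2(u)^\ast w = u \wedge w \qquad (u, w \in \nabla),
\end{equation}
after which positivity is immediate: for $x\ge 0$ one gets $\langle w, P_2(u)(x)\rangle = \langle u\wedge w, x\rangle \ge 0$, since $u\wedge w\in\nabla$ and $x\ge 0$.

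To prove (\ref{p2star}) I would exploit the Boolean structure of $\nabla$. Fix $u,w\in\nabla$ and split $w$ along $u$ and its orthocomplement $u^\perp = e-u$, setting $p = w\wedge u$ and $q = w\wedge u^\perp$. Since $p\wedge q \le u\wedge u^\perp = 0$, Lemma \ref{dis} gives $p\diamondsuit q$, and hence in the orthomodular lattice $\nabla$ one has $w = p\vee q = p+q$. By linearity of $P_2(u)^\ast$ it then suffices to evaluate $P_2(u)^\ast p$ and $P_2(u)^\ast q$ separately.

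For the $q$-part: because $q\le u^\perp$ we have $q\wedge u = 0$, so $q\diamondsuit u$ by Lemma \ref{dis}, which means $q\in U_0(u)$ and therefore $P_2(u)^\ast q = 0$. For the $p$-part: since $p\le u$, the element $r = u\wedge p^\perp = u - p$ lies in $\nabla$ and satisfies $p\diamondsuit r$, so the additivity property $P_2(u+v)=P_2(u)+P_2(v)$ for $u\diamondsuit v$ gives $P_2(u) = P_2(p)+P_2(r)$ and hence $P_2(u)^\ast = P_2(p)^\ast + P_2(r)^\ast$. Applying this to $p$ and using $P_2(p)^\ast p = p$ (the relation $p\le p$) together with $P_2(r)^\ast p = 0$ (from $p\diamondsuit r$, i.e. $p\in U_0(r)$) yields $P_2(u)^\ast p = p$. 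Combining the two computations gives $P_2(u)^\ast w = p = u\wedge w$, which is exactly (\ref{p2star}).

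I expect the main obstacle to be the two Pierce-membership steps — that $q\in U_0(u)$ and that $p$ is fixed by $P_2(u)^\ast$ — since these are the points where the abstract orthogonality of tripotents, the orthomodularity of $\nabla$, and the additivity of the $P_2$-projectors along orthogonal tripotents must be dovetailed correctly. Once the identity (\ref{p2star}) is secured, the positivity of $P_2(u)$ follows in a single line from the definition (\ref{order}) of the order.
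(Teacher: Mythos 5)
Your route is genuinely different from the paper's. The paper argues directly on $Z$: it writes $x=\alpha y$ with $y\in F_e$ (Lemma \ref{pos}), uses $P_1(u)=0$ to split $y=P_2(u)y+P_0(u)y$ with $\|P_2(u)y\|+\|P_0(u)y\|=1$, and then invokes the face property of $F_e$ to conclude $P_2(u)y/\|P_2(u)y\|\in F_e$, hence $P_2(u)x\in\mathbb{R}^+F_e$, which is positivity by Lemma \ref{pos} again. You instead work in the dual, proving the formula $P_2(u)^\ast w=u\wedge w$ for $u,w\in\nabla$ and reading positivity straight off the definition \eqref{order}. This buys a sharper statement (an explicit description of the adjoint on $\nabla$, which in the $L_1$-model is just $\chi_A\chi_B=\chi_{A\cap B}$) at the cost of heavier lattice bookkeeping; note also that, despite your opening sentence, your final argument never actually uses Lemma \ref{pos}, only \eqref{order}.

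There is one step not covered by the paper's toolkit: the identification of lattice joins with linear sums. You assert $w=p\vee q=p+q$ (and likewise $u\wedge p^\perp=u-p$) without proof, and nothing quoted from \cite{MZ} or the preliminaries says that the join of two orthogonal elements of $\nabla$ equals their sum in $Z^\ast$. The gap is repairable by bypassing joins entirely: since $p=u\wedge w\le w$, the order characterization from \cite[Lemma 4.2]{fr2} quoted in Section 2 gives that $q:=w-p$ is either zero or a geometric tripotent orthogonal to $p$, with $q\le w\le e$, so $q\in\nabla$ and $w=p+q$ by construction; then $q\wedge u\le(w\wedge u)\wedge q=p\wedge q=0$ (using Lemma \ref{dis} for $p\diamondsuit q$), so $q\diamondsuit u$ by Lemma \ref{dis} again, whence $q\in U_0(u)$ and $P_2(u)^\ast q=0$ as you say. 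The same device defines $r:=u-p\in\nabla$ with $p\diamondsuit r$ and $p+r=u$, after which property (3) from \cite{MZ} and $P_2(p)^\ast p=p$ give $P_2(u)^\ast p=p$ exactly as in your computation (the degenerate cases $p=0$ and $p=u$ being immediate from Lemma \ref{dis} and from \cite[Lemma 4.2]{fr2}, respectively). With this rerouting, your identity and the one-line positivity conclusion are sound.
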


\begin{proof}
Let $x\geq 0.$ By virtue of Lemma  \ref{pos} we have the equality
 $x=\alpha  y,$ where  $y\in F_e,\, \alpha\geq 0.$ If
$P_2(u)(y)=0,$ then  $P_2(u)(y)\in \mathbb{R}^+F_e.$ If
$P_0(u)(y)=0,$ then  $P_2(u)(y)=y\in \mathbb{R}^+F_e.$

Now let $P_2(u)(y)\neq 0$ and  $P_0(u)(y)\neq 0.$ Since
$P_1(u)=0,$ we get
$$
||P_2(u)(y)||+||P_0(u)(y)||=||P_2(u)(y)+P_0(u)(y)||=||y||=1,
$$
therefore
$$
||P_2(u)(y)||\frac{P_2(u)(y)}{||P_2(u)(y)||}+
||P_0(u)(y)||\frac{P_0(u)(y)}{||P_0(u)(y)||}=y\in F_e.
$$
Since $F_e$ is a face, we have the inclusion  $$
\frac{P_2(u)(y)}{||P_2(u)(y)||}\in F_e.
$$
Again using Lemma  \ref{pos}, we obtain $P_2(u)(x)=\alpha
P_2(u)(y)\geq 0.$
\end{proof}

\begin{lemma}
\label{posit} For every $x\in Z$  there exist mutually orthogonal
geometric tripotents
  $u_+, u_-\in \nabla$ such
that $u_++u_-=e$ and
$$
x_+=P_2(u_+)(x)\geq 0,\, x_-=P_2(u_-)(x)\leq 0.
$$
\end{lemma}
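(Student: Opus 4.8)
The plan is to read off the decomposition from the polar decomposition of $x$ combined with Lemma~\ref{JDM}. If $x=0$ one may take $u_+=e$ and $u_-=0$, so assume $x\neq 0$ and let $v_x$ be the geometric tripotent of the polar decomposition, so that $\langle v_x,x\rangle=\|x\|$. I would first note that $P_2(v_x)(x)=x$: since $P_2(v_x)^\ast v_x=v_x$, we get $\langle v_x,P_2(v_x)(x)\rangle=\langle v_x,x\rangle=\|x\|$, so $\|P_2(v_x)(x)\|=\|x\|$, and neutrality of $P_2(v_x)$ forces $P_2(v_x)(x)=x$. By Lemma~\ref{JDM} write $v_x=w_1-w_2$ with $w_1,w_2\in\nabla$ mutually orthogonal, and set $x_i=P_2(w_i)(x)$.

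The key step is the identity
\[ P_2(v_x)=P_2(w_1)+P_2(w_2). \]
Granting it, $x=P_2(v_x)(x)=x_1+x_2$; moreover, writing $e=(w_1+w_2)+(e-w_1-w_2)$ as an orthogonal sum in $\nabla$ and applying condition~(3) gives $I=P_2(w_1)+P_2(w_2)+P_2(e-w_1-w_2)$, whence $P_2(e-w_1-w_2)(x)=0$.

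Next I would pin down the signs. Since $w_1\diamondsuit w_2$ we have $x_2\in Z_0(w_1)$ and $x_1\in Z_0(w_2)$, so $\langle w_1,x_2\rangle=\langle w_2,x_1\rangle=0$, giving $\langle w_i,x\rangle=\langle w_i,x_i\rangle$; moreover $Z_2(w_1)\diamondsuit Z_0(w_1)$ together with $P_1(w_1)=0$ yields $\|x\|=\|x_1\|+\|x_2\|$. Therefore
\[ \|x\|=\langle v_x,x\rangle=\langle w_1,x_1\rangle-\langle w_2,x_2\rangle\le\|x_1\|+\|x_2\|=\|x\|, \]
so equality holds throughout and $\langle w_1,x_1\rangle=\|x_1\|$, $\langle w_2,x_2\rangle=-\|x_2\|$. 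Hence $x_1/\|x_1\|\in F_{w_1}\subset F_e$ and $-x_2/\|x_2\|\in F_{w_2}\subset F_e$, and Lemma~\ref{pos} gives $x_1\geq 0$ and $x_2\leq 0$ (the cases $x_i=0$ being trivial). Finally set $u_+=w_1$ and $u_-=e-w_1\in\nabla$; these are orthogonal by Lemma~\ref{dis}, satisfy $u_++u_-=e$, and $P_2(u_+)(x)=x_1\geq 0$, while $P_2(u_-)(x)=P_2(w_2)(x)+P_2(e-w_1-w_2)(x)=x_2\leq 0$.

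The main obstacle is the identity $P_2(v_x)=P_2(w_1)+P_2(w_2)$, since condition~(3) provides additivity only for orthogonal elements of $\nabla$, whereas here $v_x=w_1+(-w_2)$ involves the tripotent $-w_2\notin\nabla$. I would resolve this by observing that $-w_2$ is again a geometric tripotent, orthogonal to $w_1$, with $\overline{sp}F_{-w_2}=\overline{sp}F_{w_2}$ and hence $P_2(-w_2)=P_2(w_2)$, and then invoking additivity of the Peirce--$2$ projection over orthogonal geometric tripotents. Everything else is routine bookkeeping with the lemmas already established.
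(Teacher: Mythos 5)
Your proof has the same skeleton as the paper's own: the polar tripotent $v_x$, the splitting $v_x=w_1-w_2$ from Lemma~\ref{JDM}, a norm chain forcing $\langle w_1,x_1\rangle=\|x_1\|$ and $\langle w_2,x_2\rangle=-\|x_2\|$, and the choice $u_+=w_1$, $u_-=e-w_1$ (the paper's $u_-=-u_2+v_x^\perp$ is the same element $e-u_1$). The substantive problem is exactly the step you flag, and your proposed resolution is not valid as stated: additivity of the Peirce-2 projection over orthogonal geometric tripotents is \emph{false} in general neutral SFS spaces. In the predual of the von Neumann algebra $M_2(\mathbb{C})$ (a neutral SFS space), $u=e_{11}$ and $v=e_{22}$ are orthogonal geometric tripotents with $u+v=\mathbf{1}$, yet $P_2(u+v)=I$ while $P_2(u)+P_2(v)$ is the projection onto the diagonal part; the obstruction is the joint Peirce-1 subspace $Z_1(u)\cap Z_1(v)$. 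This is precisely why condition (3) quoted from \cite{MZ} is stated only for orthogonal elements of $\nabla$, where the Boolean hypothesis kills $P_1$ via condition (1). Your pair $(w_1,-w_2)$ leaves $\nabla$, and while the observation $P_2(-w_2)=P_2(w_2)$ is correct, the general additivity you then invoke has no source and is false without a vanishing joint Peirce-1 part.

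The gap is local and repairable with the paper's toolkit, because your argument never needs the operator identity, only its action on $x$, namely $x=x_1+x_2$. Run the chain as
$$
\|x\|=\langle v_x,x\rangle=\langle w_1,x_1\rangle-\langle w_2,x_2\rangle\le\|x_1\|+\|x_2\|
=\|x_1+x_2\|=\|P_2(w_1+w_2)x\|\le\|x\|,
$$
where the last two steps use orthogonality of $x_1$ and $x_2$, condition (3) applied to the pair $w_1,w_2\in\nabla$ (where it \emph{is} available), and contractivity of $P_2(w_1+w_2)$. Equality throughout pins the signs exactly as you did, and in addition gives $\|P_2(w_1+w_2)x\|=\|x\|$, so neutrality of $P_2(w_1+w_2)$ yields $x=x_1+x_2$; your remaining bookkeeping ($P_2(e-w_1-w_2)x=0$ and $P_2(u_-)x=x_2$) then goes through verbatim. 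It is worth noting that the paper itself is terse at this very point --- it writes $\|P_2(u_1)(x)+P_2(u_2)(x)\|=\|P_2(v_x)(x)\|$ without justification, and the neutrality argument above is what fills that in --- and that it pins the signs via the least-tripotent characterization (condition (2) of Lemma~\ref{pos}) rather than your $F_e$-membership route (condition (3) of Lemma~\ref{pos}); those two variants are equally fine.
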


\begin{proof}
Choose the least geometric tripotent $v_x\in \mathcal{GT}$ such
that $\langle v_x, x\rangle =||x||.$ According to Lemma \ref{JDM},
there exist  $u_1, u_2\in \nabla$ such that  $v_x=u_1-u_2.$ Put $
u_+=u_1$ and $u_-=-u_2+v_x^\perp. $ Then
\begin{align*}
||x|| & = \langle v_x, P_2(v_x)(x)\rangle= \\
& = \langle u_1, P_2(u_1)(x)\rangle- \langle u_2,
P_2(u_2)(x)\rangle\leq  ||P_2(u_1)(x)||+||P_2(u_2)(x)||=
\\ & = ||P_2(u_1)(x)+P_2(u_2)(x)||=||P_2(v_x)(x)||=||x||.
\end{align*}
This implies the following equalities:
\begin{equation}\label{part}
\langle u_1, P_2(u_1)(x)\rangle= ||P_2(u_1)(x)||,\, \langle -u_2,
P_2(u_2)(x)\rangle= ||P_2(u_2)(x)||.
\end{equation}
Since
$$
P_2(v_x^\perp)(x)= P_0(v_x)(x)=0,$$ from \eqref{part} it follows
tha $u_1$ is the least geometric tripotent such that
$$
\langle u_1, P_2(u_+)(x)\rangle= ||P_2(u_+)(x)||,
$$ and  $-u_2$ is the least geometric tripotent such that
$$
\langle -u_2, P_2(u_-)(x)\rangle= ||P_2(u_-)(x)||.
$$
Consequently, by virtue of Lemma  \ref{pos} we have $
P_2(u_+)(x)\geq 0$  and $P_2(u_-)(x)\leq 0.
$
\end{proof}

Note that the condition $u_+\,\diamondsuit \,u_-$ implies
$x_+\,\diamondsuit \,x_-.$

\begin{lemma}
\label{latt}
  $Z$ is a vector lattice, i.e., for all
  $x, y\in Z$ there exist
$
x\vee y,\, x\wedge y\in Z.
$
\end{lemma}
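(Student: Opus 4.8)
The goal is to show $Z$ is a vector lattice, and the natural strategy is to reduce everything to the existence of $x \vee 0$ (equivalently a positive part $x^+$), since then $x \vee y = (x-y) \vee 0 + y$ and $x \wedge y = -((-x)\vee(-y))$ follow from the partially-ordered-vector-space axioms established in Lemma~\ref{orsp}. So the plan is to build a candidate positive part out of the orthogonal decomposition already supplied by the previous Lemma~\ref{posit}.

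\medskip

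Concretely, given $x \in Z$, the previous lemma produces orthogonal tripotents $u_+, u_- \in \nabla$ with $u_+ + u_- = e$, and sets $x_+ = P_2(u_+)(x) \geq 0$ and $x_- = P_2(u_-)(x) \leq 0$. Since $P_1(u_+)=0$ and $u_+ \diamondsuit u_-$, property (3) of the displayed facts gives $P_2(u_+) + P_2(u_-) = P_2(e) = I$, so $x = x_+ + x_-$ is a decomposition into a positive and a negative part, and moreover $x_+ \diamondsuit x_-$ by the remark following Lemma~\ref{posit}. I would then claim $x \vee 0 = x_+$. To verify this I must check two things: first that $x_+ \geq 0$ and $x_+ \geq x$ (the latter because $x_+ - x = -x_- \geq 0$, using positivity of the order under negation of $x_-$), so $x_+$ is an upper bound of $\{x,0\}$; and second that it is the \emph{least} upper bound. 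For the least-upper-bound property I would take any $z$ with $z \geq 0$ and $z \geq x$, and show $z \geq x_+$. The key is that $z \geq x$ means $z - x_- \geq x_+$, so combined with $z \geq 0$ one wants to test against every $w \in \nabla$: using the orthogonality $u_+ \diamondsuit u_-$ to split $w$'s action and the positivity of $P_2(u_+)$ from Lemma~\ref{posit}, one evaluates $\langle w, z - x_+\rangle$ and shows it is $\geq 0$.

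\medskip

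The main obstacle I expect is precisely this minimality argument, because it requires controlling the pairing $\langle w, \cdot\rangle$ over \emph{all} $w \in \nabla$ simultaneously rather than just on the Pierce pieces attached to $u_\pm$. The clean way through is to apply $P_2(u_+)$ to the inequality $z \geq x$: since $P_2(u_+)$ is positive (Lemma~\ref{posit}) and $P_2(u_+)(x) = x_+$ while $P_2(u_+)(x_-) = 0$ by orthogonality, one gets $P_2(u_+)(z) \geq x_+$; then because $z \geq 0$ forces $P_2(u_+)(z) \leq z$ (as $z = P_2(u_+)z + P_2(u_-)z$ with $P_2(u_-)z \geq 0$), one chains $z \geq P_2(u_+)(z) \geq x_+$ and concludes $z \geq x_+$ by transitivity (Lemma~\ref{orsp}(2)). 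The delicate point to get right is the direction of each inequality and the fact that positivity of the projectors together with $P_2(u_+)+P_2(u_-)=I$ is exactly what lets the global order on $Z$ be recovered from the two complementary pieces.

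\medskip

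Once $x \vee 0 = x_+$ is established, I would finish by recording $x \vee y = (x-y)^+ + y$ and $x \wedge y = x + y - (x \vee y)$, invoking Lemma~\ref{orsp} to justify the translation-invariance and additive manipulations, which completes the verification that $Z$ is a vector lattice.
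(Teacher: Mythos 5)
Your proposal is correct and takes essentially the same route as the paper: the paper applies the decomposition lemma to $x-y$ and directly defines $x\vee y=P_2(u_+)(x)+P_2(u_-)(y)$, verifying the upper-bound and least-upper-bound properties exactly as you do, via positivity of the Pierce projections and the identity $P_2(u_+)+P_2(u_-)=I$. Your reduction to the positive part $x\vee 0=(x-y)^{+}$ plus translation is only a cosmetic repackaging, since $(x-y)\vee 0+y$ expands to precisely the paper's formula.
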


\begin{proof}
By Lemma \ref{posit} there exist mutually orthogonal geometric
  $u_+, u_-\in \nabla$ с $u_++u_-=e$
such that
$$
P_2(u_+)(x-y)\geq 0,\,
$$
$$
P_2(u_-)(x-y)\leq 0.
$$
Then
\begin{equation}\label{max}
x\vee y=P_2(u_+)(x)+P_2(u_-)(y),
\end{equation}
\begin{equation}\label{min}
x\wedge y=P_2(u_+)(y)+P_2(u_-)(x).
\end{equation}

Really, we have
$$
x\vee y-x=P_2(u_+)(x)+P_2(u_-)(y)-x=
$$
$$
=P_2(u_+)(x-x)+P_2(u_-)(y-x)\geq 0
$$
and
$$
x\vee y-y=P_2(u_+)(x)+P_2(u_-)(y)-y=
$$
$$
=P_2(u_+)(x-y)+P_2(u_-)(y-y)\geq 0.
$$

Assume that $x, y\leq z,$ where $z\in Z.$ Then
$$
z-x\vee y=z-P_2(u_+)(x)-P_2(u_-)(y)=
$$
$$
=P_2(u_+)(z-x)+P_2(u_-)(z-y)\geq 0.
$$
This means that
$$
x\vee y=P_2(u_+)(x)+P_2(u_-)(y).
$$
One can prove \eqref{min} in just the same way.
\end{proof}

\begin{lemma}
\label{nor} Let  $x\in Z$ and $x\geq 0.$ Then
$$
||x||=\langle e, x\rangle.
$$
\end{lemma}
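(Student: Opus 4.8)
The plan is to reduce the statement entirely to the structural description of the positive cone already obtained in Lemma~\ref{pos}. Since $x\geq 0$, part~(3) of that lemma places $x$ in $\mathbb{R}^+F_e$, so I may write $x=\alpha y$ for some $y\in F_e$ and some $\alpha\geq 0$. The whole proof then hinges on reading off $\|x\|$ and $\langle e,x\rangle$ from this single representation.

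First I would record the two defining properties of an arbitrary element $y\in F_e$. By the definition of the norm exposed face $F_e$ we have $\langle e,y\rangle=1$. Moreover, because $F_e$ is a subset of the unit ball we have $\|y\|\leq 1$, while the chain $1=\langle e,y\rangle\leq\|e\|\,\|y\|=\|y\|$ forces $\|y\|\geq 1$; hence $\|y\|=1$. These are exactly the two normalizations that make the scaling factor $\alpha$ simultaneously compute both quantities of interest.

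With $x=\alpha y$ in hand, the computation is immediate from linearity of $e$ and homogeneity of the norm: on the one hand $\langle e,x\rangle=\alpha\langle e,y\rangle=\alpha$, and on the other hand $\|x\|=\alpha\|y\|=\alpha$ (using $\alpha\geq 0$). Comparing the two yields $\|x\|=\alpha=\langle e,x\rangle$, which is the claim.

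I do not expect any genuine obstacle here, since the hard analytic content was already absorbed into Lemma~\ref{pos}: the equivalence $x\geq 0\Leftrightarrow x\in\mathbb{R}^+F_e$ is precisely what converts the order-theoretic hypothesis into the concrete ray representation. The only point deserving a line of care is the normalization $\|y\|=1$ for $y\in F_e$, which is not a separate assumption but is forced by $\|e\|=1$ together with the face condition $\langle e,y\rangle=1$; once this is noted, the remainder is a one-step calculation.
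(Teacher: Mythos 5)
Your proof is correct, but it takes a different path through Lemma~\ref{pos} than the paper does. The paper's own argument uses clause~(2) of that lemma: it takes the least geometric tripotent $v_x$ with $\langle v_x,x\rangle=\|x\|$, notes $v_x\in\nabla$ (the paper's citation of Lemma~\ref{orsp} at that point is a slip; Lemma~\ref{pos} is what is meant), and then sandwiches: $v_x\le e$ together with $x\ge 0$ gives $\langle v_x,x\rangle\le\langle e,x\rangle$ (implicitly pairing $x$ against $v_x^\perp=e-v_x\in\nabla$), while $\|e\|=1$ gives $\langle e,x\rangle\le\|x\|$, so $\|x\|=\langle v_x,x\rangle\le\langle e,x\rangle\le\|x\|$. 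You instead invoke clause~(3), writing $x=\alpha y$ with $y\in F_e$, $\alpha\ge 0$, and compute both sides to be $\alpha$ after the normalization $\|y\|=1$, which you correctly force from $F_e\subset Z_1$ and $1=\langle e,y\rangle\le\|e\|\,\|y\|$. Your route is the more elementary one at this point in the text: it avoids handling the polar-decomposition tripotent and the order comparison altogether, and it absorbs the degenerate case $x=0$ harmlessly via $\alpha=0$ (a case the paper's proof, which presupposes the existence of $v_x$, technically skips). The only global caveat is that nothing is saved in the aggregate: the equivalence $(1)\Leftrightarrow(3)$ of Lemma~\ref{pos} that you rely on is itself proved through the same polar decomposition, so the two arguments have the same ultimate foundation, differing only in which consequence of Lemma~\ref{pos} they quote.
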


\begin{proof}
Choose the least geometric tripotent $v_x\in \mathcal{GT}$ such
that $v_x(x)=||x||.$ In accordance with Lemma  \ref{orsp} we have
 $v_x\in\nabla.$ Since  $v_x\leq e,$ we get
 $\langle v_x, x\rangle\leq \langle e, x\rangle.$
Therefore,
$$
||x||=\langle v_x, x\rangle\leq \langle e, x\rangle \leq ||x||.
$$
Thus, we obtain the equality $||x||=\langle e, x\rangle.$
 \end{proof}

Recall that a Banach lattice   $X$  is called an abstract
$L$-space, if
$$
||x+y||=||x||+||y||
$$
whenever $x, y\in Z, x, y\geq 0, x\wedge y =0$  (see  \cite[P.
14]{Lin} and  \cite{kak}).

\begin{lemma}
\label{main}
  $Z$  is an abstract  $L$-space.
\end{lemma}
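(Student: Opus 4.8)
The plan is to supply the two facts that upgrade an ordered Banach space to an abstract $L$-space: that the norm is a lattice norm (so that $Z$, already a vector lattice by Lemma~\ref{latt} and an ordered vector space by Lemma~\ref{orsp}, is a Banach lattice), and that the norm is additive on the positive cone. The additivity turns out to be almost immediate from Lemma~\ref{nor}, whereas the lattice-norm property rests on the identity $\|x\|=\bigl\||x|\bigr\|$, which is the only step requiring real work.

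For the additive property, let $x,y\in Z$ with $x,y\ge 0$ and $x\wedge y=0$. From the order \eqref{order} it is clear that $x+y\ge 0$, since $\langle u, x+y\rangle=\langle u,x\rangle+\langle u,y\rangle\ge 0$ for every $u\in\nabla$. Applying Lemma~\ref{nor} to each of $x$, $y$ and $x+y$ and using linearity of the functional $e$, I obtain
\[
\|x+y\|=\langle e,x+y\rangle=\langle e,x\rangle+\langle e,y\rangle=\|x\|+\|y\|.
\]
Observe that the hypothesis $x\wedge y=0$ is not actually used; the additivity holds for any pair of positive elements.

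It remains to verify that the norm is a lattice norm, i.e.\ that $0\le|x|\le|y|$ forces $\|x\|\le\|y\|$, where $|x|=x\vee(-x)$. First I would establish $\|x\|=\bigl\||x|\bigr\|$ for every $x$. Using the decomposition of Lemma~\ref{posit}, write $x=x_++x_-$ with $x_+=P_2(u_+)(x)\ge 0$, $x_-=P_2(u_-)(x)\le 0$ and $u_+\diamondsuit u_-$; by the remark following that lemma $x_+\diamondsuit x_-$, so orthogonality yields $\|x\|=\|x_++x_-\|=\|x_+\|+\|x_-\|$. The same $u_\pm$ fed into the lattice formula \eqref{max} identify $|x|=x\vee(-x)=x_+-x_-$, a positive element whose norm is $\|x_+\|+\|x_-\|$ (again by orthogonality, or by Lemma~\ref{nor}); hence $\|x\|=\bigl\||x|\bigr\|$. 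Monotonicity on the positive cone is then a one-line consequence of Lemma~\ref{nor}: if $0\le a\le b$ then $b-a\ge 0$, and since $e\in\nabla$ the order \eqref{order} gives $\langle e,b-a\rangle\ge 0$, whence $\|a\|=\langle e,a\rangle\le\langle e,b\rangle=\|b\|$. Combining these, $|x|\le|y|$ gives $\|x\|=\bigl\||x|\bigr\|\le\bigl\||y|\bigr\|=\|y\|$.

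The main obstacle I anticipate is precisely the identity $\|x\|=\bigl\||x|\bigr\|$: it requires correctly matching the lattice absolute value $|x|$ with the orthogonal splitting $x_+-x_-$ coming out of Lemma~\ref{posit}, and then invoking the orthogonality $x_+\diamondsuit x_-$ to convert the lattice operation into an additive norm. Everything else is formal bookkeeping. Completeness of $Z$, needed to speak of a Banach lattice rather than merely a normed lattice, is inherited from the standing assumption that $Z$ is a neutral strongly facially symmetric space.
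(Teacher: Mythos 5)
Your proposal is correct and follows essentially the same route as the paper's proof: norm additivity on the positive cone via Lemma~\ref{nor}, monotonicity from $\|a\|=\langle e,a\rangle$, and the identity $\|x\|=\bigl\||x|\bigr\|$ obtained from the decomposition $x=x_++x_-$ of Lemma~\ref{posit} together with the orthogonality $x_+\diamondsuit x_-$. Your explicit identification $|x|=x_+-x_-$ via \eqref{max} and your remark that the hypothesis $x\wedge y=0$ is not needed only spell out details the paper leaves implicit.
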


\begin{proof} First, let us prove that
$$
0\leq x\leq y \Rightarrow ||x||\leq ||y||;
$$
$$
|||x|||=||x||.
$$
Let  $0\leq x\leq y.$ Then
$$
||x||=\langle e, x\rangle\leq \langle e, y\rangle=||y||,
$$
i.e.,
$$
||x||\leq ||y||.
$$
Further, we have
$$
|||x|||=||x_++x_-||=[x_+\,\diamondsuit \,x_-]=
$$
$$
=||x_+-x_-||=||x||.
$$
Thus, the space $Z$ is a Banach lattice.

Now let  $x, y\geq 0.$ Using Lemma  \ref{nor}, we get
$$
||x+y||=\langle e, x+y\rangle= \langle e, x\rangle+\langle e,
y\rangle= ||x||+||y||.$$ This means that $Z$ is an abstract
$L$-space.
\end{proof}

Now the proof of the theorem follows from Lemma  \ref{main} and
Theorem 1.b.2 in \cite{Lin}.

\end{document}